\documentclass[12pt]{amsart}
\usepackage{amssymb,slashed, color,array}
\usepackage{amsmath,amsfonts,amsthm,euscript,mathrsfs,multirow}
\usepackage[all]{xy}
\usepackage[english]{babel}
\usepackage{epsf}
\usepackage{graphicx}
\textwidth 160mm \textheight 210mm \topmargin 0pt \oddsidemargin 2 mm \evensidemargin 3 mm

\csname @addtoreset\endcsname{equation}{section}

%

\newtheorem{theorem}{Theorem}[section]
\newtheorem{lemma}[theorem]{Lemma}
\newtheorem{corol}[theorem]{Corollary}

\theoremstyle{definition}
\newtheorem{definition}[theorem]{Definition}
\theoremstyle{remark}

\newenvironment{notation and conventions}{\textbf{Notation and conventions.}}{ }
\DeclareFontFamily{U}{rsf}{} \DeclareFontShape{U}{rsf}{m}{n}{ <5> <6> rsfs5 <7> <8> <9> rsfs7 <10-> rsfs10}{}
\DeclareMathAlphabet\Scr{U}{rsf}{m}{n}
\definecolor{pink}{rgb}{1,0,1}

  \usepackage[pdftex]{hyperref}

\begin{document}

\begin{titlepage}
\begin{center}
\baselineskip=14pt{\LARGE
 On Hirzebruch invariants of elliptic fibrations\\
}
\vspace{3 cm}
{\large  James Fullwood$^{\clubsuit,}$ , Mark van Hoeij$^\clubsuit$
  } \\

\vspace{.6cm}

${}^\clubsuit$Mathematics Department, Florida State University, Tallahassee, FL 32306, U.S.A.\\

\end{center}

\vspace{1cm}
\begin{center}

{\bf Abstract}
\vspace{.3 cm}
\end{center}

{\small

We compute all Hirzebruch invariants $\chi_q$ for $D_5$, $E_6$, $E_7$ and $E_8$ elliptic fibrations of every dimension. A single generating series $\chi(t,y)$ is produced for each family of fibrations such that the coefficient of $t^{k}y^{q}$ encodes $\chi_q$ over a base of dimension $k$, solely in terms of invariants of the base of the fibration.
\vfill

$^\clubsuit$Email:\quad {\tt jfullwoo at math.fsu.edu, hoeij at math.fsu.edu}
}

\end{titlepage}
\addtocounter{page}{1}
 \tableofcontents{}
\newpage

\section{Introduction}\label{intro}
The prospect of realizing realistic particle physics (such as the Standard Model) in a regime of string theory coined ``F-theory" by its originator Cumrun Vafa \cite{Vafa:1996xn}\cite{Denef les houches} has provided a source of attraction for string theorists and their mathematician counterparts to the study of elliptic fibrations. F-theory was first formulated as a non-perturbative description of Type-IIB string theory on a complex $n$-fold $B$ with an $SL_{2}(\mathbb{Z})$-invariant complex scalar field\footnote{Technically this is an abuse of language, as it is the whole type-IIB theory which is $SL_{2}(\mathbb{Z})$-invariant.} known to physicists as the axio-dilaton field. Exercising a string theorist's natural penchant for algebro-geometric descriptions of nature, Vafa formulated a geometrization of the $SL_{2}(\mathbb{Z})$-invariance of the axio-dilaton via a Calabi-Yau elliptic fibration over the type-IIB $n$-fold $B$ (which describes a theory in 10-2$n$ real space-time dimensions), interpreting the axio-dilaton as the complex structure modulus of an elliptic curve. Not only is this formulation of non-perturbative type-IIB string theory aesthetically pleasing from a purely geometric perspective, the physical theory has attractive features such as providing promising avenues for moduli-stabilization and potential realization of GUT gauge groups which project by definition to the Standard Model gauge group at lower non-supersymmetric energy levels. To realize the elliptic fibration explicitly, physicists have primarily focused on a Weierstrass fibration, i.e., a hypersurface in a $\mathbb{P}^2$-bundle over the Type-IIB base $B$ which in its reduced form is defined as the zero-scheme associated with the locus

\[
y^2z=x^3+fxz^2+gz^3,
\]
where $f$ and $g$ are sections of appropriate tensor powers of a line bundle $\mathscr{L}$ on $B$. As in the theory of curves, every smooth elliptic fibration is birational to a (possibly singular) fibration in Weierstrass form, often referred to in the physics literature as an $E_8$ elliptic fibration. But crucial to the physical theory associated with an elliptic fibration are the singular fibers of the fibration, as the singular fibers encode the structure of gauge theories associated with D-branes wrapping components of the discriminant locus over which they appear. And since singular fibers of a fibration are in general not preserved under a birational transformation, elliptic fibrations not in Weierstrass form enjoy their own physical relevance.

Motivated by tadpole cancellation in F-theory, in \cite{SVW} Sethi,Vafa and Witten derived a formula for the Euler characteristic of an elliptically fibered Calabi-Yau fourfold in Weierstrass form solely in terms of the Chern classes of the base of the fibration. Similar formulas for fibrations not in Weierstrass form were derived by Klemm, Lian, Roan and Yau in \cite{KLRY}. It was later shown by Aluffi and Esole in \cite{AE1}\cite{AE2} that these formulas are all numerical avatars of more general Chern class identities which hold not only without any Calabi-Yau hypothesis but over a base of arbitrary dimension. In this note we consider four families of fibrations $\varphi:Y\to B$ which are known to the physics community as $D_5$, $E_6$, $E_7$ and $E_8$ elliptic fibrations from a purely mathematical perspective (i.e., with no Calabi-Yau hypothesis or restrictions on the dimension of the base), and pursue similar formulas not for the Chern classes of a given fibration $Y$ but for its Hirzebruch invariants (or arithmetic genera)

\[
\chi_{q}(Y):=\int \text{ch}(\Omega_{Y}^{q})\text{td}(Y),
\]
where $\text{ch}(\Omega_{Y}^{q})$ denotes the Chern character of the $q$th exterior power of the cotangent bundle of $Y$ and $\text{td}(Y):=\text{td}(TY)\cap [Y]$, i.e., the Todd class of the tangent bundle of $Y$ acting on the fundamental class of $Y$.

As integrals are invariant under proper pushforwards of the integrand, we relate $\chi_{q}(Y)$ to invariants of the base by pushing forward $\text{ch}(\Omega_{Y}^{q})\text{td}(Y)$ via $\varphi_{*}$ (the pushforward map associated with $\varphi:Y\to B$). By the celebrated Hirzebruch-Riemann-Roch theorem (later generalized by Grothendieck),
\[
\chi_{q}(Y)=\sum_{i} (-1)^i \text{dim} H^{i}(Y,\Omega_{Y}^q)=h^{q,0}-h^{q,1}+\cdots +(-1)^{q}h^{q,q},
\]
thus Hirzebruch invariants yield linear relations among the Hodge numbers of $Y$. For a general smooth complex projective variety $X$ of fixed dimension, the standard approach to computing Hirzebruch invariants of $X$ is to encode them in a generating series

\[
\tag{$\dagger$}
\chi(y)=\displaystyle\sum_{q} \chi_{q}y^q=\int_{X} \prod_{i=1}^{\text{dim}(X)}(1+ye^{-\lambda_{i}})\frac{\lambda_i}{1-e^{-\lambda_i}},
\]
where the $\lambda_{i}$s are the Chern roots of the tangent bundle of $X$. Given an elliptic fibration $\varphi:Y\to B$ of type $D_5$, $E_6$, $E_7$ or $E_8$ over a base $B$ of \emph{arbitrary dimension}, what we achieve in this note is a single generating series $\chi(t,y)$ for each family where the coefficient of $t^{k}y^{q}$ encodes $\chi_q$ for the given family of elliptic fibrations over a base of dimension $k$, solely in terms of Chern classes of the base and the first Chern class of a line bundle $\mathscr{L}\to B$ (i.e., one can see the Hirzebruch invariants of the fibration as \emph{functions} of invariants of the base).

Let $B$ be a smooth compact complex projective variety of arbitrary dimension endowed with a line bundle $\mathscr{L}\to B$. The elliptic fibrations we consider will all be subvarieties of an ambient projective bundle $\mathbb{P}(\mathscr{E})\to B$ (each fibration will be precisely defined in \S\ref{def}), where $\mathscr{E}$ is a vector bundle over $B$ that is constructed by taking direct sums of tensor powers of $\mathscr{L}$. Before stating the main result of this note, let us make the following definitions:

Let $X$ be a smooth variety. We define the \emph{Hirzebruch series} of $X$ to be

\[
\mathscr{H}_{y}(X):=\mathscr{H}_{0}(X)+\mathscr{H}_{1}(X)y+\mathscr{H}_{2}(X)y^2+\cdots,
\]
\\
where $\mathscr{H}_{q}(X):=\text{ch}(\Omega_{X}^{q})\text{td}(X)$ is the $q$th \emph{Hirzebruch characteristic class} of $X$. Then given a proper morphism $\varphi:X\to B$ we define $\varphi_{*}(\mathscr{H}_{y}(X))$ in the obvious way:

\[
\varphi_{*}(\mathscr{H}_{y}(X)):=\varphi_{*}(\mathscr{H}_{0}(X))+\varphi_{*}(\mathscr{H}_{1}(X))y+\varphi_{*}(\mathscr{H}_{2}(X))y^2+\cdots,
\]
\\
where $\varphi_{*}$ is the proper pushforward associated with the morphism $\varphi$. Our main result is the following

\begin{theorem}\label{Mainresult}
Let $\varphi:Y\to B$ be an elliptic fibration of type $D_5$, $E_6$, $E_7$ or $E_8$ and let $U=e^{-c_1(\mathscr{L})}$. Then

\[
\varphi_{*}(\mathscr{H}_{y}(Y))=Q\cdot \mathscr{H}_{y}(B),
\]
where
\[
Q=
\begin{cases}
4-y + \frac{(y+1)(yU -3)}{(yU^2+1)} - \frac{U(y+1)^2}{(yU^2+1)^2} \hspace{0.25in} \text{for $Y$ a $D_5$ fibration} \\
3-y + \frac{(y+1)(yU^2-U-2)}{(yU^3+1)} \hspace{0.75in} \text{for $Y$ an $E_6$ fibration} \\
2-y + \frac{(y+1)(yU^3-U-1)}{(yU^4+1)} \hspace{0.75in} \text{for $Y$ an $E_7$ fibration} \\
1-y + \frac{(y+1)(yU^5-U-0)}{(yU^6+1)} \hspace{0.75in} \text{for $Y$ an $E_8$ fibration} \\
\end{cases}
\]
\end{theorem}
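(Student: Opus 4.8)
Each of the four fibrations (see \S\ref{def}) is cut out of a projective bundle $\pi\colon\mathbb{P}(\mathscr{E})\to B$, with $\mathscr{E}=\bigoplus_j\mathscr{L}^{a_j}$, by the vanishing of sections of line bundles $\mathscr{N}_1,\dots,\mathscr{N}_c$ ($c=1$ for the $E_6,E_7,E_8$ hypersurfaces, $c=2$ for the $D_5$ complete intersection), so $\varphi$ factors as a regular embedding $\iota\colon Y\hookrightarrow\mathbb{P}(\mathscr{E})$ followed by $\pi$, and $\varphi_*(\mathscr{H}_y(Y))=\pi_*\iota_*(\mathscr{H}_y(Y))$ is to be computed in two stages. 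The tool throughout is that $\mathscr{H}_y$ is the multiplicative characteristic class attached to the series $(1+ye^{-x})\tfrac{x}{1-e^{-x}}$: thus $\mathscr{H}_y(X)=\mathscr{H}_y(TX)$, $\mathscr{H}_y$ of a trivial line bundle equals $1+y$, and $\mathscr{H}_y$ turns short exact sequences of vector bundles into products (hence quotients). Writing $\zeta=c_1(\mathscr{O}_{\mathbb{P}(\mathscr{E})}(1))$, $L=c_1(\mathscr{L})$ and $\nu_i=c_1(\mathscr{N}_i)$, the relative Euler sequence $0\to\mathscr{O}\to\pi^*\mathscr{E}\otimes\mathscr{O}(1)\to T_\pi\to0$, the sequence $0\to T_\pi\to T\mathbb{P}(\mathscr{E})\to\pi^*TB\to0$, and the normal bundle sequence $0\to TY\to T\mathbb{P}(\mathscr{E})|_Y\to\bigoplus_i\mathscr{N}_i|_Y\to0$ express $\mathscr{H}_y(Y)$ as $\iota^*$ of a class on $\mathbb{P}(\mathscr{E})$ equal to $\pi^*\mathscr{H}_y(B)$ times an explicit quotient built from the factors $(1+ye^{-(\zeta+a_jL)})\tfrac{\zeta+a_jL}{1-e^{-(\zeta+a_jL)}}$, $1+y$, and $(1+ye^{-\nu_i})\tfrac{\nu_i}{1-e^{-\nu_i}}$.

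The first pushforward is immediate from the projection formula: since $\iota_*(1)=[Y]=\prod_i\nu_i$, the classes $\nu_i$ cancel and
\[
\iota_*\mathscr{H}_y(Y)=\pi^*\mathscr{H}_y(B)\cdot\frac{1}{1+y}\prod_i\frac{1-e^{-\nu_i}}{1+ye^{-\nu_i}}\prod_j\frac{(1+ye^{-(\zeta+a_jL)})(\zeta+a_jL)}{1-e^{-(\zeta+a_jL)}}.
\]
Applying $\pi_*$ and the projection formula once more pulls $\mathscr{H}_y(B)$ out in front — this is precisely what forces $Q$ to depend only on $\mathscr{L}$ and not on $TB$ — so $Q=\pi_*(\Phi)$, where $\Phi(\zeta)$ is the remaining bracket. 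To compute this fiberwise integral I would use the residue form of the projective-bundle pushforward: for any power series $\Psi$,
\[
\pi_*\Psi(\zeta)=\sum_{l}\operatorname{Res}_{\zeta=-b_lL}\frac{\Psi(\zeta)\,d\zeta}{\prod_j(\zeta+a_jL)},
\]
the sum running over the distinct Chern roots $-b_lL$ of $\mathscr{E}$, counted with their multiplicities $m_l$ in $\prod_j(\zeta+a_jL)$. The key point is that the numerators $\prod_j(\zeta+a_jL)$ coming from the relative Euler sequence cancel the Chern polynomial $\prod_j(\zeta+a_jL)$ in the denominator, so that $Q$ is a sum of residues of
\[
\frac{1}{1+y}\prod_i\frac{1-e^{-\nu_i}}{1+ye^{-\nu_i}}\prod_j\frac{1+ye^{-(\zeta+a_jL)}}{1-e^{-(\zeta+a_jL)}} .
\]
Since $\nu_i$ is linear in $\zeta$, each residue — computed by setting $t=\zeta+b_lL$ and reading off the coefficient of $t^{m_l-1}$ — becomes a rational function of $y$ and of $U=e^{-L}$ (every $e^{-cL}$ turns into $U^c$). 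Summing the residues gives $Q$. Two automatic sanity checks: $Q$ is a polynomial in $L$ (so the apparent poles cancel in the sum), and setting $\mathscr{L}=\mathscr{O}$, i.e. $U=1$ — when the fibration degenerates to a constant one $B\times E$ — each of the four expressions specializes to $Q=0=\chi_y(E)$.

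What is left is to run this computation for the four pairs $(\mathscr{E},\{\nu_i\})$ of \S\ref{def} and to simplify, and this is where I expect the real work to lie. For $E_8$ and $E_7$ the bundle $\mathscr{E}$ has distinct Chern roots, so only simple poles occur and the sum of three elementary residues collapses to the stated closed form. For $E_6$ ($\mathscr{E}=\mathscr{O}\oplus\mathscr{L}\oplus\mathscr{L}$) and, more severely, for $D_5$ ($\mathscr{E}=\mathscr{O}\oplus\mathscr{L}^{\oplus3}$) there is a repeated Chern root, so the associated residue has a higher-order pole and the extraction of $[t^{m_l-1}]$ brings in a derivative of the integrand — this is exactly the mechanism producing the denominator $(yU^2+1)^2$ and the numerator $(y+1)^2$ in the $D_5$ formula, and keeping track of this (together with the fact that $D_5$ is a codimension-two complete intersection) is the main obstacle. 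One may sidestep the non-reduced Chern polynomial by deforming $\mathscr{E}$ to $\mathscr{O}\oplus\mathscr{L}_1\oplus\cdots$ with distinct line bundles $\mathscr{L}_i$, carrying out the purely simple-pole computation, and then specializing $\mathscr{L}_i\to\mathscr{L}$ — harmless since the output is polynomial in the Chern classes. Carrying out the resulting algebra and matching with the four displayed expressions for $Q$ completes the proof.
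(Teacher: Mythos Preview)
Your proposal is correct and follows essentially the same route as the paper: factor $\mathscr{H}_y(Y)$ using the normal-bundle, relative-tangent, and relative-Euler exact sequences together with the multiplicativity of $\mathscr{H}_y$ (which the paper packages as the ``Chern-ext character'' and Lemma~\ref{L}), apply the projection formula to pull out $\mathscr{H}_y(B)$, and then evaluate $\pi_*$ of the remaining bracket explicitly. The only cosmetic difference is in the last step: you use the residue form of the projective-bundle pushforward (with your deformation trick as an alternative for repeated Chern roots), while the paper invokes the equivalent derivative-based formula of \cite{James}---for $D_5$, the triple root at $H=-L$ there produces exactly the second-derivative expression that your order-$3$ residue would, yielding the $(yU^2+1)^2$ denominator you correctly anticipated.
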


The proof of this result is considerably streamlined via the use of \emph{Chern-ext characters}, which we first introduce and define in \S\ref{Chernext}. We note that the numbers 4, 3, 2 and 1 in the expressions for $Q$ coincide with the number of sections of the given fibration. Reading off the coefficient of $y^q$ in $Q\cdot \mathscr{H}_{y}(B)$ immediately yields

\[
\chi_{q}(Y)=\int_{B} \left(P_{0}\mathscr{H}_{q}(B)+P_{1}\mathscr{H}_{q-1}(B)+\cdots +P_{q}{\rm td}(B)\right),
\]
where the $P_{i}$s are polynomials in $U=e^{-c_{1}(\mathscr{L})}$ (we list all $P_{i}$s in \S\ref{proof1}). Furthermore,
from Theorem~\ref{Mainresult} we derive generating series $\chi(t,y)$ for each family of fibrations, where the coefficient of $t^{k}y^q$ encodes $\chi_{q}$ for the given family of fibrations over a base of dimension $k$. Before unveiling the $\chi(t,y)$ we need the following definitions:
\\

Let $R$ be a commutative ring with unity. For two series $f(t)=a_0+a_1t+a_2t^2+\cdots$ and $g(t)=b_0+b_1t+b_2t^2+\cdots$ in $R[[t]]$, we recall the \emph{Hadamard product} of $f$ and $g$ is defined to be

\[
f\odot g:=a_0b_0+a_1b_1t+a_2b_2t^2+\cdots.
\]
\\
Furthermore, let $[t^d]:R[[t]] \rightarrow R$ denote the map given by $[t^d][f] := a_d$ . Now let $X$ be a smooth projective variety of dimension $d$, let $g=(1+ye^{-t})\frac{t}{1-e^{-t}}$ and let $f=\ln(g)$. As a consequence of Lemma~\ref{lemma1.1} we show

\[
\mathscr{H}_{y}(X)=(1+y)^d \cdot [t^d][{\rm exp}\left( f \odot (-tC'/C)   \right) ],
\]
\\
where $C=1-c_1t+c_2t^2-c_3t^3+\cdots \in R[[t]]$ with $R=\mathbb{Z}[c_1,c_2,\ldots,]$ \footnote{As $ f \odot (-tC'/C)$ is independent of $d$, the $c_{i}$s appearing in the definition of $C$ are countably many formal variables which acquire their familiar meaning as Chern classes of the tangent bundle of $X$ in concrete examples.}. As $\varphi_{*}(\mathscr{H}_{y}(Y))=Q\cdot \mathscr{H}_{y}(B)$ by Theorem~\ref{Mainresult}, the generating series $\chi(t,y)$ is constructed by replacing $U=e^{-c_{1}(\mathscr{L})}$ by $U_{t}=e^{-c_{1}(\mathscr{L})t}$ in $Q$ and then constructing a series in $R[[t]]$ with $R=\mathbb{Z}[c_1,c_2,\ldots]$ such that the coefficient of $t^d$ is precisely $(1+y)^d \cdot [t^d]{\rm exp}\left( f \odot (-tC'/C)   \right)$, which we interpret as the Hirzebruch series for a base of dimension $d$. All Hirzebruch invariants $\chi_q$ for $D_5$, $E_6$, $E_7$ and $E_8$ fibrations of all dimensions are then contained in the following

\begin{corol}\label{genseries}
Let $Q_{t}=Q(e^{-c_1(\mathscr{L})t})$, where $Q$ is defined as in Theorem~\ref{Mainresult} and let
\[
\overset \sim \chi(t,y)=Q_{t}\cdot \exp\left(\ln\left((1+ye^{-t})\frac{t}{1-e^{-t}}\right)\odot \frac{-tC'}{C}\right),
\]
where $C=1-c_{1}t+c_{2}t^2-\cdots \in R[[t]]$ with $R=\mathbb{Z}[c_1,c_2,\ldots]$ and $C'=\frac{d}{dt}C$. Then

\[
\chi(t,y):=\overset \sim \chi(t(1+y),y)
\]
\\
is a generating series for Hirzebruch invariants of $D_5$, $E_6$, $E_7$ and $E_8$ fibrations as the definition of $Q$ varies according to Theorem~\ref{Mainresult}, i.e., the integral of the coefficient of $t^{k}y^{q}$ over a base of dimension $k$ is precisely $\chi_q$ for the given family of fibrations.
\end{corol}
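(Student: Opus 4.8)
The plan is to deduce Corollary~\ref{genseries} from Theorem~\ref{Mainresult} together with the generating-series expression for the Hirzebruch class coming from Lemma~\ref{lemma1.1}; the only genuine content is the interplay between the formal variable $t$ and the cohomological grading. Write $h=c_{1}(\mathscr{L})$, $g=(1+ye^{-t})\frac{t}{1-e^{-t}}$, $f=\ln g=\sum_{m\ge0}f_{m}t^{m}$, and $E(t):=\exp\!\left(f\odot(-tC'/C)\right)$, working over $R[y,(1+y)^{-1}][[t]]$ with $R=\mathbb{Z}[c_{1},c_{2},\ldots]$ (legitimate because every denominator occurring in $Q(e^{-x})$ and in $f$ specialises to a power of $1+y$ at $x=0$). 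Fix a base $B$ with $\dim B=k$. Since $\int_{Y}\alpha=\int_{B}\varphi_{*}\alpha$ for all $\alpha$, Theorem~\ref{Mainresult} gives
\[
\chi(y):=\sum_{q}\chi_{q}(Y)\,y^{q}=\int_{B}\varphi_{*}\mathscr{H}_{y}(Y)=\int_{B}Q\cdot\mathscr{H}_{y}(B),
\]
so it suffices to prove $\int_{B}[t^{k}]\,\chi(t,y)=\chi(y)$, after which reading off the coefficient of $y^{q}$ finishes the argument.

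The first step is a grading principle. Assign cohomological degree $i$ to each of $t^{-i}$, $h^{i}$ and $c_{i}$, so that upon specialising $c_{i}\mapsto c_{i}(TB)$ on the $k$-fold $B$ a degree-$0$ power series in $t$ has each coefficient $[t^{m}]$ become a cohomology class of pure degree $m$. By Newton's identities $-tC'/C=\sum_{m\ge1}p_{m}t^{m}$ with $p_{m}$ the $m$th power-sum polynomial in $c_{1},\ldots,c_{m}$ (hence of degree $m$), so $f\odot(-tC'/C)=\sum_{m\ge1}f_{m}p_{m}t^{m}$ is homogeneous of degree $0$; since a monomial $p_{m_{1}}\cdots p_{m_{r}}$ has degree $m_{1}+\cdots+m_{r}$ matching the power of $t$ it carries, exponentiation preserves homogeneity, so $[t^{j}]E(t)$ equals the degree-$j$ part of $E(1)=\exp\!\big(\sum_{m\ge1}f_{m}p_{m}\big)$. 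Likewise $Q_{t}=Q(e^{-ht})=\sum_{i\ge0}\widetilde{Q}_{i}h^{i}t^{i}$, where $Q(e^{-x})=\sum_{i}\widetilde{Q}_{i}x^{i}$, is homogeneous of degree $0$, so $\overset{\sim}{\chi}(t,y)=Q_{t}E(t)$ is as well. In particular the reparametrisation $t\mapsto t(1+y)$ merely rescales the degree-$m$ piece by $(1+y)^{m}$:
\[
[t^{k}]\,\chi(t,y)=[t^{k}]\,\overset{\sim}{\chi}\big(t(1+y),y\big)=(1+y)^{k}\,[t^{k}]\,\overset{\sim}{\chi}(t,y).
\]

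Next I would evaluate on $B$. The formula $\mathscr{H}_{y}(X)=(1+y)^{d}[t^{d}]E(t)$ for a $d$-fold $X$, established via Lemma~\ref{lemma1.1}, refines (as its proof shows) to the graded statement $(1+y)^{-k}\mathscr{H}_{y}(B)=\sum_{j}[t^{j}]E(t)$ once $c_{i}=c_{i}(TB)$, the original being its $j=k$ part (all that matters under $\int_{B}$). Combining this with $Q=\sum_{i}\widetilde{Q}_{i}h^{i}$ and using that $\widetilde{Q}_{i}h^{i}$ has degree $i$,
\[
[t^{k}]\,\overset{\sim}{\chi}(t,y)\Big|_{c_{i}=c_{i}(TB)}=\sum_{i+j=k}\widetilde{Q}_{i}h^{i}\,[t^{j}]E(t)=(1+y)^{-k}\bigl(Q\cdot\mathscr{H}_{y}(B)\bigr)_{\deg k},
\]
since the sum over $i+j=k$ reconstitutes the degree-$k$ component of the product $Q\cdot\mathscr{H}_{y}(B)$. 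Multiplying by $(1+y)^{k}$ and integrating over $B$, and noting that $\int_{B}$ picks out the degree-$k$ component of any class, yields
\[
\int_{B}[t^{k}]\,\chi(t,y)=\int_{B}\bigl(Q\cdot\mathscr{H}_{y}(B)\bigr)_{\deg k}=\int_{B}Q\cdot\mathscr{H}_{y}(B)=\chi(y),
\]
as required.

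The step I expect to be most delicate is precisely this degree bookkeeping: one must establish that $(1+y)^{-\dim}\mathscr{H}_{y}$ is \emph{dimension-free}, in the sense that the degree-$j$ component of $(1+y)^{-d}\mathscr{H}_{y}(X)$ is the same universal polynomial in $c_{1},\ldots,c_{j}$ for all $d\ge j$. This is exactly what makes a single series valid in every base dimension, and the reparametrisation $t\mapsto t(1+y)$ is engineered to restore the factor $(1+y)^{k}$ that the $[t^{k}]$-truncation would otherwise discard. The remaining points are routine: verifying that $Q(e^{-x})$ and $f=\ln g$ admit the asserted expansions over $\mathbb{Z}[y,(1+y)^{-1}]$, and observing that although these carry poles along $1+y=0$ and non-polynomial $y$-dependence individually, the combination that actually occurs, $Q\cdot\mathscr{H}_{y}(B)=\varphi_{*}\mathscr{H}_{y}(Y)$, is genuinely polynomial in $y$ (being the pushforward of the polynomial-in-$y$ class $\mathscr{H}_{y}(Y)$), so no such defect survives in $\chi(t,y)$.
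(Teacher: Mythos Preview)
Your argument is correct and follows the same route as the paper's own proof: both reduce to Theorem~\ref{Mainresult} together with the identity $\mathscr{H}_{y}(B)=(1+y)^{d}\,[t^{d}]\exp\bigl(f\odot(-tC'/C)\bigr)$ obtained from Lemma~\ref{lemma1.1}, and both finish by matching this against $[t^{d}]\overset{\sim}{\chi}(t(1+y),y)$. The paper compresses the last identification into a single sentence, whereas you make the underlying homogeneity principle explicit (grading $t^{-i}$, $h^{i}$, $c_{i}$ in degree $i$ so that $Q_{t}E(t)$ is homogeneous of degree $0$, whence $[t^{k}]$ after $t\mapsto t(1+y)$ produces exactly the factor $(1+y)^{k}$); this is precisely the content the paper is silently using when it asserts that the right-hand side of $(\dagger\dagger)$ ``is just'' $[t^{d}]\overset{\sim}{\chi}(t,y)\big|_{t=t(1+y)}$, so your version is a more complete write-up of the same proof rather than a different one.
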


As an illustration, the coefficient of $t^{4}y^{2}$ in $\chi(t,y)$ in the $E_6$ case is $-\frac{L}{12}(1729L^3-524c_{1}L^2+(-17c_{1}^2+193c_2)L+5c_{1}c_{2}-66c_3)$, thus $\chi_{2}$ of an $E_6$ fibration over a base $B$ of dimension four is (for a computer implementation see \cite{URL})

\[
\int_{B} -\frac{L}{12}(1729L^3-524c_{1}(B)L^2+(-17c_{1}(B)^2+193c_{2}(B))L+5c_{1}(B)c_{2}(B)-66c_{3}(B)).
\]

\section{The fibrations under consideration}\label{def}

We now formally define the objects under consideration, namely $D_5$, $E_6$, $E_7$ and $E_8$ elliptic fibrations (the names and definitions we use are all lifted from the physics literature \cite{KLRY}). We work over $\mathbb{C}$ though everything we say is equally valid over an algebraically closed field of characteristic zero. All fibrations are constructed by taking equations of classical elliptic curves and promoting their coefficients from scalars (or sections of line bundles over a point) to sections of line bundles over some smooth positive dimensional base variety $B$. As such, let $B$ be some smooth compact complex projective variety of arbitrary dimension endowed with (a suitably ample) line bundle  $\mathscr{L}\to B$. This will be the base assumption in each of the $D_5$, $E_6$, $E_7$ and $E_8$ cases.

Now let $\mathscr{E}=\mathscr{O}\oplus \mathscr{L}\oplus \mathscr{L}\oplus \mathscr{L}$. A \emph{$D_5$ elliptic fibration} $Y_{D_5}$ is defined to be a smooth complete intersection in $\mathbb{P}(\mathscr{E})$ (here we take the projective bundle of \emph{lines} in $\mathbb{P}(\mathscr{E})$) associated with the locus

\[
Y_{D_5}:
\begin{cases}
x^2-y^2-z(az+cw)=0\\
w^2-x^2-z(dz+ex+fy)=0,
\end{cases}
\]
where $z$ is a section of $\mathscr{O}(1)$ (the dual of the tautological line bundle on $\mathbb{P}(\mathscr{E})$), and $x$, $y$ and $w$ are sections of $\mathscr{O}(1)\otimes \pi^{*}\mathscr{L}$, where $\pi$ is the projection $\pi:\mathbb{P}(\mathscr{E})\to B$. Then we take $a$, $c$, $d$, $e$ and $f$ to be sections of (minimal) appropriate tensor powers of $\pi^{*}\mathscr{L}$ such that each of the defining equations for $Y_{D_5}$ is a well defined section of a line bundle on $\mathbb{P}(\mathscr{E})$. Taking $a$ and $d$ to be sections of $\pi^{*}\mathscr{L}^2$ and $c$, $e$ and $f$ to be sections of $\pi^{*}\mathscr{L}$ then defines $Y_{D_5}$ as a variety of class $(2H+2L)^2\in A^{*}\mathbb{P}(\mathscr{E})$, where $H:=c_1(\mathscr{O}(1))$ and $L:=c_1(\pi^{*}\mathscr{L})$. Such a locus naturally determines an elliptic fibration $\varphi:Y_{D_5}\to B$, with generic fiber an elliptic curve in $\mathbb{P}^3$. Such fibrations contain fibers not on the list of Kodaira\footnote{More precisely, over the locus $a=c=d=e=f=0$ in the base the fibers consist of four $\mathbb{P}^{1}$s meeting at a point.} and were studied extensively in \cite{EFY} from both a mathematical and physical perspective. We note that as our results are topological in nature, they depend only on the class $[Y]\in A^{*}\mathbb{P}(\mathscr{E})$ of the given fibration, i.e., the explicit equations which define the fibration are essentially irrelevant. The equations are included for concreteness and to not completely sever ourselves from the physical theories with which they are associated. The definitions of $E_6$, $E_7$ and $E_8$ fibrations are summarized in the following table:
\\
\begin{center}
\begin{tabular}{|c|c|c|c|}
\hline
 & equation & ambient projective bundle & class in $A^{*}\mathbb{P}(\mathscr{E})$  \\
\hline
$E_6$ & $x^3+y^3=dxyz+exz^2+fyz^2+gz^3$ & $\mathbb{P}(\mathscr{O}\oplus \pi^{*}\mathscr{L}\oplus \pi^{*}\mathscr{L})$ & $3H+3L$ \\
\hline
$E_7$ & $y^2=x^4+ex^2z^2+fxz^3+gz^4$ & $\mathbb{P}_{1,1,2}(\mathscr{O}\oplus \pi^{*}\mathscr{L}\oplus \pi^{*}\mathscr{L}^2)$ & $4H+4L$ \\
\hline
$E_8$ & $y^{2}z=x^3+fxz^2+gz^3$ & $\mathbb{P}(\mathscr{O}\oplus \pi^{*}\mathscr{L}^{2}\oplus \pi^{*}\mathscr{L}^{3})$ & $3H+6L$ \\
\hline
\end{tabular}
\\
\end{center}

The coefficients of each fibration are chosen to be suitably generic sections of tensor powers of $\pi^{*}\mathscr{L}$ such that the fibration is a smooth divisor of the indicated class in $A^{*}\mathbb{P}(\mathscr{E})$. We point out that the total space of an  $E_7$ fibration is defined as a hypersurface in a \emph{weighted} projective bundle, which is isomorphic to a bundle of quadric cones. To avoid dealing with any singularities of the ambient projective bundle while performing intersection theoretic computations, we embed $\mathbb{P}_{1,1,2}(\mathscr{E})$ in a $\mathbb{P}^3$-bundle and then realize the total space of the $E_7$ fibration as a complete intersection of the image of $\mathbb{P}_{1,1,2}(\mathscr{E})$ via its embedding in the $\mathbb{P}^3$-bundle with another hypersurface.

\section{A motivating example}
Let $B$ be a non-singular compact complex algebraic variety of arbitrary dimension endowed with a (suitably ample) line bundle $\mathscr{L}$. We recall the definition of an $E_8$ elliptic fibration, i.e., a surjective proper morphism $\varphi:Y\to B$, whose total space $Y$ is realized as a hypersurface of class $3H+6L$ in the the Chow ring of a projective bundle $\mathbb{P}(\mathscr{E})\overset \pi \to B$, where $\mathscr{E}=\mathscr{O} \oplus \mathscr{L}^2 \oplus \mathscr{L}^3$, $H:=c_1(\mathscr{O}(1))$ and $L$ we non-reluctantly use to denote both $c_1(\mathscr{L})$ and $\pi^{*}c_1(\mathscr{L})$. As one can show that $\varphi_{*}c(Y)=\frac{12L}{1+6L}\cdot c(B)$ \cite{AE1}, we exploit the fact that $\int_{Y}c(Y)=\int_{B}\varphi_{*}c(Y)$ and compute the topological Euler characteristic $\chi(Y)$ by integrating the coefficient of $t^{dim(B)}$ in the formal series $\chi^{dim(B)}(t)$, where we define $\chi^{N}(t)$ for general $N\in \mathbb{N}$ to be \footnote{Here and throughout, terms not expanded in a series such as $\frac{12Lt}{1+6Lt}$ in the series above are a shorthand for their associated series expansions about $t$.}

\[
\chi^{N}(t):=\frac{12Lt}{1+6Lt} \cdot(1+c_1t+c_2t^2+ \cdots +c_{N}t^{N}).
\]
As the series $\chi^{n}(t)$ and $\chi^{n+1}(t)$ are identical up to order $n$ (for any $n$), we notice that the formal series
\[
\chi^{E_8}(t)=\frac{12Lt}{1+6Lt} \cdot(1+c_1t+c_2t^2+ \cdots +c_mt^m+ \cdots)
\]
serves as a generating series for the topological Euler characteristic for $Y$ of all possible dimensions (i.e., the coefficient of $t^k$  encodes the Euler characteristic of $Y$ over a base of dimension $k$, solely in terms of $L$ and Chern classes of $B$). The $c_is$ are then temporarily formal objects (as their subscripts tend towards infinity), acquiring their familiar meaning whence integrated upon. For example, over a base of dimension $3$ the coefficient of $t^3$ in $\chi^{E_8}(t)$ is $12L(c_2-6Lc_1+36L^2)$, so $\chi(Y)$ over a base $B$ of dimension $3$ is

\[
\int_{B} 12L(c_2(B)-6Lc_1(B)+36L^2).
\]

Though admittingly these observations are all rather trivial, what is striking is that the pushforward $\varphi_{*}c(Y)$ is manifestly independent of the base of the fibration, i.e., over a base of dimension $k$ the actual class in $A^{*}B$ associated with $\varphi_{*}c(Y)$ is obtained by truncating a formal powers eries at order $k$. As such, we deem this a ``motivating example" as it exhibits general features which can be abstracted to cases of other invariants of elliptic fibrations of the form $\int \alpha$. In particular, the first step at arriving at such base independent expressions for $\varphi_{*}c(Y)$ is deriving a factorization of $c(Y)$ (which plays the role of $\alpha$ in our more general considerations) as $g(L,H)\cdot \pi^{*}c(B)$, where $g$ is a rational expression depending only on $L$ and $H$ (as defined above). Once we have such a factorization, we get that

\[
\varphi_{*}c(Y)=\pi_{*}(g(L,H))c(B)=\frac{12L}{1+6L}c(B),
\]
an expression which depends in no way on the dimension of $B$. Essential to the base independence of the formula above is a pushforward formula which computes $\pi_{*}(g(L,H))$ in terms of a rational expression in $L$ whose associated series is truncated at the dimension of the base to obtain the given class. Such a pushforward formula was recently obtained in \cite{James}. More generally, for a given invariant of the form $I_{Y}=\int \alpha$ for some subvariety $Y$ of a projective bundle, we seek an analogous factorization  $\alpha=g(L,H)\pi^{*}(I_{B})$, where the class associated with $g$ is obtained by truncating a formal series in $L$ and $H$ at the dimension of $Y$. Then by applying the pushforward formula of \cite{James} to $g(L,H)$ we immediately arrive at a base independent expression for the pushforward of $\alpha$, which lends itself naturally to a generating series which encodes the invariant $I_{Y}$ for $Y$ over bases of arbitrary dimension. In what follows we successfully carry out this program for Hirzebruch invariants of $D_5$, $E_6$, $E_7$ and $E_8$ elliptic fibrations.

\section{The Chern-ext character}\label{Chernext}

We now define a series which plays a key role in our analysis:\\

\begin{definition}
Let $\mathscr{E}$ be a vector bundle, then we define the \emph{Chern-ext character} of $\mathscr{E}$ to be

\[
\text{ch}_{ext}(\mathscr{E}):=1+\text{ch}(\mathscr{E})y+\text{ch}(\Lambda^{2}\mathscr{E})y^2+\cdots.
\]
\end{definition}
We note that for any commutative ring $R$ with unity, any element of the form $f(y)=1+a_1y+a_2y^2+\cdots$ is a unit in $R[[y]]$, thus $\frac{1}{f(y)}$ is well defined. If $\mathscr{E}$ is of rank $r$ with Chern roots $(\lambda_{1},\lambda_{2},\cdots,\lambda_{r})$, then $\text{ch}_{ext}(\mathscr{E})$ is a polynomial in $y$ of degree $r$ which factors as

\[
\text{ch}_{ext}(\mathscr{E})=\prod_{i=1}^{r}(1+y\cdot \exp(\lambda_{i})).
\]
However, due to the dimension-independent nature of our results we prefer to think of $\text{ch}_{ext}$ as a series, which also makes more evident the invertability of the Chern-ext character. Furthermore, given a smooth projective variety $X$ we note that the Hirzebruch series of $X$ is simply

\[
\mathscr{H}_{y}(X)=\text{ch}_{ext}(\Omega_{X})\text{td}(X).
\]

Armed with such a series, we now prove the following

\begin{lemma}\label{L}
Let
\[
0\to \mathscr{A} \to \mathscr{B} \to \mathscr{C} \to 0
\]
be an exact sequence of vector bundles. Then

\[
{\rm ch}_{ext}(\mathscr{B})={\rm ch}_{ext}(\mathscr{A})\cdot {\rm ch}_{ext}(\mathscr{C})
\]
\end{lemma}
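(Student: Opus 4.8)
The plan is to exploit the multiplicativity of the Chern character together with the splitting principle. The key algebraic fact I would isolate first is that for a line bundle $\mathscr{M}$ with Chern root $\lambda$, one has $\text{ch}_{ext}(\mathscr{M}) = 1 + y\exp(\lambda)$, which is immediate from the definition since $\Lambda^k\mathscr{M} = 0$ for $k \geq 2$. Then for a direct sum of line bundles $\mathscr{M}_1 \oplus \cdots \oplus \mathscr{M}_r$ with Chern roots $\lambda_i$, the identity $\Lambda^k(\bigoplus \mathscr{M}_i) = \bigoplus_{|S|=k} \bigotimes_{i \in S}\mathscr{M}_i$ and multiplicativity of $\text{ch}$ on tensor products give
\[
\text{ch}_{ext}\Bigl(\bigoplus_{i=1}^r \mathscr{M}_i\Bigr) = \sum_{k=0}^r y^k \sum_{|S|=k}\prod_{i\in S}\exp(\lambda_i) = \prod_{i=1}^r (1 + y\exp(\lambda_i)) = \prod_{i=1}^r \text{ch}_{ext}(\mathscr{M}_i),
\]
i.e. the product formula recorded in the excerpt. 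This establishes the lemma in the split case, where the short exact sequence is a direct sum decomposition.

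Next I would reduce the general case to the split case. By the splitting principle there is a flat morphism $p\colon X' \to X$ (a tower of projective bundles, or the full flag bundle of $\mathscr{B}$) such that $p^*$ is injective on Chow groups (with $\mathbb{Q}$-coefficients, which is harmless since Chern characters are rational classes) and such that $p^*\mathscr{A}$, $p^*\mathscr{B}$, $p^*\mathscr{C}$ all split as direct sums of line bundles; moreover the pulled-back sequence $0 \to p^*\mathscr{A} \to p^*\mathscr{B} \to p^*\mathscr{C} \to 0$ is still exact, so the Chern roots of $p^*\mathscr{B}$ are the union (with multiplicity) of those of $p^*\mathscr{A}$ and $p^*\mathscr{C}$ — this is exactly the Whitney-type additivity of Chern roots across a short exact sequence. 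Applying the split case to $p^*\mathscr{B}$ and factoring the product over the two sub-collections of roots yields
\[
\text{ch}_{ext}(p^*\mathscr{B}) = \text{ch}_{ext}(p^*\mathscr{A})\cdot \text{ch}_{ext}(p^*\mathscr{C}).
\]
Since $\text{ch}_{ext}$ commutes with pullback (each coefficient $\text{ch}(\Lambda^q(-))$ does, because both $\Lambda^q$ and $\text{ch}$ are natural), this reads $p^*\bigl(\text{ch}_{ext}(\mathscr{B})\bigr) = p^*\bigl(\text{ch}_{ext}(\mathscr{A})\cdot\text{ch}_{ext}(\mathscr{C})\bigr)$, and injectivity of $p^*$ gives the claimed identity on $X$, coefficient by coefficient in $y$.

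The only genuinely delicate point is the bookkeeping with the formal variable $y$: the identity is an equality in $A^*(X)_{\mathbb{Q}}[[y]]$ (or, if ranks are finite, in the polynomial subring), and I would note at the outset that each coefficient of $y^q$ is a polynomial identity among Chern classes, so it suffices to verify it coefficient-wise — which is precisely what the splitting-principle argument does once one has the root decomposition. A secondary subtlety worth a sentence is that $\text{ch}$ is only additive and multiplicative up to the usual conventions; here we only ever use $\text{ch}(\mathscr{M}\otimes\mathscr{N}) = \text{ch}(\mathscr{M})\text{ch}(\mathscr{N})$ for line bundles and the direct-sum additivity, both of which are standard. I do not anticipate any real obstacle beyond this formal accounting; the heart of the proof is the elementary exterior-algebra identity $\Lambda^k(\mathscr{A}\oplus\mathscr{C}) \cong \bigoplus_{i+j=k}\Lambda^i\mathscr{A}\otimes\Lambda^j\mathscr{C}$, which is what makes $\text{ch}_{ext}$ exponential in the first place.
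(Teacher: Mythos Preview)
Your proof is correct. The paper's own argument, however, takes a more compressed route: it invokes the $\lambda$-ring identity $\lambda^p(x+y)=\sum_{i=0}^p\lambda^i(x)\lambda^{p-i}(y)$ in $K$-theory (where $[\mathscr{B}]=[\mathscr{A}]+[\mathscr{C}]$ for any short exact sequence) together with the fact that the Chern character is a ring homomorphism, to obtain directly
\[
\mathrm{ch}(\Lambda^p\mathscr{B})=\sum_{i=0}^p \mathrm{ch}(\Lambda^i\mathscr{A})\cdot\mathrm{ch}(\Lambda^{p-i}\mathscr{C}),
\]
from which the lemma follows by recognising the Cauchy product. Your approach instead makes the splitting principle explicit and reduces to the product formula over Chern roots, relying on Whitney additivity to match the roots of $\mathscr{B}$ with those of $\mathscr{A}$ and $\mathscr{C}$. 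The underlying content is the same exterior-algebra identity, but the paper's packaging via $\lambda$-rings is shorter and avoids passing to a flag bundle, while your argument is more self-contained and makes the role of the Chern roots (already emphasised in the paper's discussion of $\mathrm{ch}_{ext}$) transparent.
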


\begin{proof}
>From the $\lambda$-ring identity (\cite{FL}, pg.2)

\[
\lambda^{p}(x+y)=\displaystyle\sum_{i=0}^{p}\lambda^{i}(x)\lambda^{p-i}(y),
\]
along with nice properties of the Chern character (\cite{fulton}, example 3.2.3), we get

\[
\text{ch}(\Lambda^{p}\mathscr{B})=\displaystyle\sum_{i=0}^{p}\text{ch}(\Lambda^{i}\mathscr{A})\cdot \text{ch}(\Lambda^{p-i}\mathscr{C}).
\]
The lemma immediately follows.
\end{proof}

\section{The proof}
\subsection{Proof of main result}\label{proof1}
Let $\varphi:Y\to B$ be a $D_5$, $E_6$, $E_7$ or $E_8$ elliptic fibration as defined in \S\ref{def} and let $N$ denote both the normal bundle of $Y$ in $\mathbb{P}(\mathscr{E})$ and the bundle on $\mathbb{P}(\mathscr{E})$ which restricts to it. Using the exact sequences (we use a superscript ``$\vee$" to denote duals)

\begin{eqnarray*}
     & 0\to N^{\vee}\to i^{*}\Omega_{\mathbb{P}(\mathscr{E})} \to \Omega_Y \to 0 \\
     & 0\to \pi^{*}\Omega_{B}\to \Omega_{\mathbb{P}(\mathscr{E})}\to \Omega_{\mathbb(\mathscr{E})/B}\to 0 \\
     & 0\to \Omega_{\mathbb{P}(\mathscr{E})/B} \to (\pi^{*}\mathscr{E}\otimes \mathscr{O}(1))^{\vee}\to \mathscr{O}_{\mathbb{P}(\mathscr{E})} \to 0,  \\
\end{eqnarray*}
along with Lemma~\ref{L} we get

\[
i_{*}\left(\text{ch}_{ext}(\Omega_{Y})\text{td}(Y)\right)=\left(\frac{\text{ch}_{ext}(\mathscr{F}^{\vee})}{\text{ch}_{ext}(N^{\vee})(1+y)}\cdot \alpha\right)\pi^{*}\left(\text{ch}_{ext}(\Omega_{B})\text{td}(B)\right),
\]
\\
where $\mathscr{F}$ we use to denote $\pi^{*}\mathscr{E}\otimes \mathscr{O}(1)$, $\alpha=\frac{\mathscr{F}}{\text{td}(N)}\cap [Y]$, $i:Y\hookrightarrow \mathbb{P}(\mathscr{E})$ is the inclusion and we use the fact that $\text{ch}_{ext}(\mathscr{O}_{\mathbb{P}(\mathscr{E})})=1+y$. Furthermore, $\pi^{*}$ (and $\pi_{*}$) act on Chern-ext characters in the obvious manner. Now we apply $\pi_{*}$ to the equation above yielding

\[
\varphi_{*}\left(\text{ch}_{ext}(\Omega_{Y})\text{td}(Y)\right)=\pi_{*}\left(\frac{\text{ch}_{ext}(\mathscr{F}^{\vee})}{\text{ch}_{ext}(N^{\vee})(1+y)}\cdot \alpha\right)\text{ch}_{ext}(\Omega_{B})\text{td}(B)
\]
\\
by the projection formula. Thus computing $\varphi_{*}\left(\text{ch}_{ext}(\Omega_{Y})\text{td}(Y)\right)$ amounts to computing

\[
\pi_{*}\left(\frac{\text{ch}_{ext}(\mathscr{F}^{\vee})}{\text{ch}_{ext}(N^{\vee})(1+y)}\cdot \alpha\right).
\]

We spell out the details of this computation in the case of $D_5$ only, as the other cases differ inasmuch as the Chern roots of $\mathscr{F}$ and $N$ vary from case to case. For $D_5$ the Chern roots of $\mathscr{F}$ and $N$ are $(H,H+L,H+L,H+L)$ and $(2H+2L,2H+2L)$ respectively, where $H:=c_1(\mathscr{O}(1))$ and $L$ we will use to denote both $c_1(\mathscr{L})$ and $\pi^{*}c_{1}(\mathscr{L})$. Putting this all together we get

\[
\frac{\text{ch}_{ext}(\mathscr{F}^{\vee})}{\text{ch}_{ext}(N^{\vee})(1+y)}\cdot \alpha=\frac{(1+y\cdot e^{-H})(1+y\cdot e^{-H-L})^{3}}{(1+y\cdot e^{-2H-2L})^2(1+y)}\cdot \frac{H(H+L)^{3}(1-e^{-2H-2L})^{2}}{(1-e^{-H})(1-e^{-H-L})^3},
\]
\\
where we have cancelled a factor of $(2H+2L)^2$ from the numerator and denominator of $\alpha$. Now let $D=\frac{(1+y\cdot e^{-H})(1+y\cdot e^{-H-L})^{3}}{(1+y\cdot e^{-2H-2L})^2(1+y)}\cdot \frac{H(H+L)^{3}(1-e^{-2H-2L})^{2}}{(1-e^{-H})(1-e^{-H-L})^3}$. By the pushforward formula of \cite{James} we get

\begin{eqnarray*}
\pi_*(D)&=&\frac{1}{2}\frac{d^2}{dH^2}\left(\frac{D-(a_0+a_1H+a_2H^2)}{H}\right)\mid_{H=-L} \\
        &=&4-y + \frac{(y+1)(yU -3)}{(yU^2+1)} - \frac{U(y+1)^2}{(yU^2+1)^2}, \\
\end{eqnarray*}
where $U=e^{-L}$ and the $a_{i}$s are expressions in $L$ and $y$ obtained by expanding $D$ as a series in $H$. Identifying $\text{ch}_{ext}(\Omega_{Y})\text{td}(Y)$ and $\text{ch}_{ext}(\Omega_{B})\text{td}(B)$ with $\mathscr{H}_{y}(Y)$ and $\mathscr{H}_{y}(B)$ respectively yields the conclusion of Theorem ~\ref{Mainresult}. We note that reading off the coefficient of $y^q$ in the series $\varphi_{*}(\mathscr{H}_{y}(Y))$ gives us

\[
\varphi_{*}(\mathscr{H}_{q}(Y))=\sum_{i=0}^{q}P_{q-i}\mathscr{H}_{i}(B),
\]
where we recall that $\mathscr{H}_{q}(X):=\text{ch}(\Omega_{X}^{q})\text{td}(X)$ denotes the $q$th Hirzebruch characteristic class of a smooth variety $X$ and the $P_{i}$s are polynomials in $U=e^{-L}$. We list the explicit form of the $P_i$s for each case below:

\begin{center}
\begin{tabular}{|c|c|c|c|}
\hline
  & $P_0$ & $P_1$ & $P_n$ for $n>1$  \\
\hline
$D_5$ & $1-U$ & $2U^3+3U^2-U-4$ & $-U((n+1)U-n+2)(U-1)(U+1)^2(-U^2)^{n-2}$    \\
\hline
$E_6$ & $1-U$ & $U^4+2U^3+U^2-U-3$ & $-U^{2}(U^3-1)(U+1)^2(-U^3)^{n-2}$ \\
\hline
$E_7$ & $1-U$ & $U^5+U^4+U^3-U-2$ & $-U^3(U^4-1)(U^2+U+1)(-U^4)^{n-2}$ \\
\hline
$E_8$ & $1-U$ & $U^7+U^5-U-1$ & $-U^5(U^6-1)(U^2+1)(-U^6)^{n-2}$ \\
\hline

\end{tabular}
\end{center}

The fact that $P_0$ is the same in all cases is a consequence of the fact that $K_{Y}=\varphi^{*}(c_1(\mathscr{L})-c_1(B))$ for all the fibrations considered (see the appendix of \cite{EFY}). With the exception of the $D_5$ case, all roots of $P_{n}$ for $n>1$ lie on $(S^{1}\cup {0})\subset \mathbb{C}$ (for the $D_5$ case an ``anomolous" root of $\frac{n-2}{n+1}$ appears). We note that the length of our original proof was substantially greater, as we computed each $\chi_q$ individually and then proved a recursive relation between them. As it turned out, it was much easier to compute all of the $\chi_q$s at once, which we were easily able to do once armed with the Chern-ext character and a computer implementation of $\pi_{*}$ \cite{URL}.

\subsection{Proof of the corollary}

We first need some definitions. Let $R$ be a commutative ring with 1, let $f = a_0 + a_1 t + a_2 t^2 + \cdots \in R[[t]]$ and let $[t^d]: R[[t]] \rightarrow R$ be defined as in \S\ref{intro}.
If $\lambda_1,\ldots,\lambda_d \in R$, we use the notation $p_i := \lambda_1^i + \cdots \lambda_d^i$ and we let
\[ C := \prod_{i=1}^d (1 - \lambda_i t) = 1 - c_1 t + c_2 t^2 - c_3 t^3 + \cdots  \]
Then $c_i$ is the $i$th {\em symmetric polynomial}, and $p_i$ is the $i$th {\em power polynomial} of $\lambda_1,\ldots,\lambda_d$.
\begin{lemma}
\label{lemma1.1}
Let $f(t) = a_0 + a_1 t + \cdots \in R[[t]]$.  Then
\[ \sum_{i=1}^d f(\lambda_i t) = f \odot (d + p_1 t + p_2 t^2 + \cdots) = d a_0 + f \odot (-tC'/C), \]
where $\odot$ denotes the Hadamard product as defined in \S\ref{intro}.
\end{lemma}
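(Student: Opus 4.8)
The statement is essentially a bookkeeping identity about Hadamard products, so the plan is to verify the two claimed equalities separately, working with the monomial basis of $R[[t]]$ in the variable $t$. First I would expand the left-hand side: since $f(t)=\sum_{j\ge 0}a_j t^j$, substituting $t\mapsto\lambda_i t$ and summing over $i$ gives $\sum_{i=1}^d f(\lambda_i t)=\sum_{j\ge 0} a_j\bigl(\sum_{i=1}^d\lambda_i^j\bigr) t^j=\sum_{j\ge 0}a_j p_j t^j$, where $p_0=d$ by convention. Comparing with the definition of the Hadamard product, $f\odot(b_0+b_1t+\cdots)=\sum_j a_j b_j t^j$, this is exactly $f\odot(d+p_1t+p_2t^2+\cdots)$, which settles the first equality and also reduces the second to showing that $d+p_1t+p_2t^2+\cdots$ agrees with $(d,0,0,\dots)+(-tC'/C)$ coefficientwise — i.e. that the coefficient of $t^j$ in $-tC'/C$ equals $p_j$ for every $j\ge 1$ (the $t^0$ term of $-tC'/C$ is $0$, which is why the extra $da_0$ appears).

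The remaining content is therefore the classical Newton identity in generating-function form: with $C=\prod_{i=1}^d(1-\lambda_i t)$ one has $-tC'/C=\sum_{i=1}^d\frac{\lambda_i t}{1-\lambda_i t}=\sum_{i=1}^d\sum_{j\ge1}\lambda_i^j t^j=\sum_{j\ge1}p_j t^j$. The cleanest route is to take the logarithmic derivative of $C$ directly: $\ln C=\sum_{i=1}^d\ln(1-\lambda_i t)$, so $C'/C=\frac{d}{dt}\ln C=\sum_{i=1}^d\frac{-\lambda_i}{1-\lambda_i t}$, and multiplying by $-t$ and expanding each geometric series yields $\sum_{j\ge1}p_j t^j$ as claimed. (One may alternatively avoid differentiating a formal logarithm by clearing denominators: $-tC'=\bigl(\sum_{j\ge1}p_jt^j\bigr)C$ becomes a finite polynomial identity which is the usual recursive Newton identity $c_k\cdot k = \sum_{j=1}^{k}(-1)^{j-1}p_j c_{k-j}$, provable by induction on $k$; but the logarithmic-derivative argument is shorter and the formal manipulations are all legitimate since $C$ has constant term $1$.)

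Putting the two pieces together: $\sum_{i=1}^d f(\lambda_i t)=f\odot(d+p_1t+p_2t^2+\cdots)$ from the first step, and since $d+p_1t+p_2t^2+\cdots = d + (-tC'/C)$ with the $d$ sitting in degree $0$, the Hadamard product splits as $f\odot(d+\cdots) = a_0\cdot d + f\odot(-tC'/C)$, giving the second equality. I do not anticipate a genuine obstacle here; the only point requiring a modicum of care is keeping track of the degree-$0$ term, namely that $p_0$ should be read as $d$ on the middle expression while $-tC'/C$ has vanishing constant term, which accounts for the isolated $da_0$ summand on the right. Everything else is a direct comparison of coefficients in $R[[t]]$, valid formally regardless of $d$.
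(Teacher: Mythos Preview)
Your proof is correct and follows essentially the same route as the paper: the first equality is read off directly from the definition of the power sums $p_j$, and the second is the logarithmic-derivative identity $-tC'/C=\sum_{j\ge1}p_j t^j$, obtained by expanding each factor $\lambda_i t/(1-\lambda_i t)$ as a geometric series (the paper phrases this as ``obvious for $d=1$'' plus the additivity of logarithmic derivatives for $d>1$). Your explicit handling of the degree-$0$ term and the $da_0$ summand is a bit more detailed than the paper's, but the argument is the same.
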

\begin{proof}
The first equality follows from the definition of the $p_i$. For the second, note that $-tC'/C$ is well defined because the polynomial $C$
has a constant term of 1.  The equation $-tC'/C = p_1 t + p_2 t^2 + \cdots$ is obvious for $d=1$ (geometric series). For $d>1$, recall that logarithmic
derivatives turn products into sums: $(CD)'/(CD) = C'/C + D'/D$.
\end{proof}

Now let $Y$ be a $D_5$, $E_6$, $E_7$ or $E_8$ elliptic fibration over a base $B$ of some fixed dimension $d$. Then by Theorem~\ref{Mainresult} and equation ($\dagger$) in \S\ref{intro},

\[
\varphi_{*}\mathscr{H}_{y}(Y)=Q\cdot \mathscr{H}_{y}(B)=Q\cdot \prod_{i=1}^{d}g(\lambda_i),
\]
where the $\lambda_i$s are the Chern roots of the tangent bundle of $B$ and $g=(1+ye^{-t})\frac{t}{1-e^{-t}}$. Now let $f=\ln(g)=a_0+a_1t+\cdots$ (note that $a_{0}=\ln(1+y)$) . Lemma~\ref{lemma1.1} then yields

\begin{eqnarray*}
\mathscr{H}_{y}(B) =  \prod_{i=1}^d g(\lambda_i) &=& [t^d][ \prod_{i=1}^d g(\lambda_i t) ] \\
        &=&  [t^d][{\rm exp}( \sum_{i=1}^d f(\lambda_i t) ) ] \\
        &=&  [t^d][{\rm exp}( d a_0 + f \odot (-tC'/C)   ) ]\\
        &=&  (1+y)^d \cdot [t^d][{\rm exp}\left( f \odot (-tC'/C)   \right) ].
\end{eqnarray*}
The conclusion of Theorem~\ref{Mainresult} then states that over a base $B$ of dimension $d$ we have

\[
\tag{$\dagger \dagger$}
\varphi_{*}\mathscr{H}_{y}(Y)=Q\cdot  (1+y)^d \cdot [t^d][{\rm exp}\left( f \odot (-tC'/C)   \right)].
\]
But the right hand side of ($\dagger \dagger$) is just $[t^d]\overset \sim \chi(t,y)\mid_{t=t(1+y)}$, with $\overset \sim \chi(t,y)$ as defined in Corollary~\ref{genseries}. The corollary then follows.

\section{Discussion}

After going through the proof of the main result, one immediately notices that the only data needed from the elliptic fibrations under consideration were the Chern roots of its normal bundle in $\mathbb{P}(\mathscr{E})$ along with the Chern roots of the relative tangent bundle $T_{\mathbb{P}(\mathscr{E})/B}$ of the ambient projective bundle $\mathbb{P}(\mathscr{E})$. As such, our program can be carried out verbatim for any smooth subvariety of $\mathbb{P}(\mathscr{E})$, long as $\mathscr{E}$ is a direct summand of tensor powers of a fixed line bundle on the base (this assumption is needed to apply the pushforward formula from \cite{James}). More precisely, take any smooth complete intersection $X$ in some projective space $\mathbb{P}^n$ given by equations $X:(F_1=F_2=\cdots =F_m=0)$, promote the coefficients of the $F_i$ to appropriate sections of tensor powers of a fixed line bundle on some smooth base variety $B$ and we will have then constructed a fibration $\varphi:Y\to B$ such that the generic fiber is a complete intersection which is rationally equivalent to $X$. Then substituting the $n+1$ Chern roots of the relative tangent bundle of the ambient $\mathbb{P}^n$-bundle where $Y$ resides along with the $m$ Chern roots of the normal bundle to $Y$ into our calculations above will yield analogous results for the ``$X$ fibration" $\varphi:Y\to B$. Our results are thus genuinely more general than the title of this note suggests (see \cite{URL} for more details).

We conclude by noting that a true culmination of these results will not be achieved without a Lefschetz hyperplane type theorem for varieties in projective bundles, as hypersurfaces in projective bundles are almost never ample divisors (which is the key assumption of the Lefschetz hyperplane theorem). Once such a theorem is unveiled, only the middle cohomology will be unique to a hypersurface in a projective bundle thus rendering only $\lceil \frac{d}{2} \rceil$ of its Hodge numbers as non-trivial (where $d$ is the dimension of the hypersurface). The Hirzebruch invariants could then be used for the determination of the non-trivial Hodge numbers. As the cohomology of a projective bundle can be related to its base via the projective bundle theorem, a Lefschetz hyperplane type theorem along with the results in this paper would then relate all Hodge numbers of a hypersurface (and so complete intersections) in a projective bundle to invariants of the base.

\thebibliography{99}
\bibitem{Vafa:1996xn}
  C.~Vafa,
  ``Evidence for F theory,''
  Nucl.\ Phys.\  {\bf B469}, 403-418 (1996).
  [hep-th/9602022].

\bibitem{Denef les houches}
F. Denef.
\newblock Les Houches Lectures on Constructing String Vacua
\newblock arXiv:0803.1194.

\bibitem{SVW}
S.~Sethi, C.~Vafa, and E.~Witten.
\newblock Constraints on low-dimensional string compactifications.
\newblock {\em Nuclear Phys. B}, 480(1-2):213--224, 1996.

\bibitem{KLRY}
  A.~Klemm, B.~Lian, S.~S.~Roan, S.~-T.~Yau,
  ``Calabi-Yau fourfolds for M theory and F theory compactifications,''
  Nucl.\ Phys.\  {\bf B518}, 515-574 (1998).
  [hep-th/9701023].

\bibitem{AE1}  P.~Aluffi, M.~Esole,
  ``Chern class identities from tadpole matching in type IIB and F-theory,''JHEP {\bf 0903}, 032 (2009).
  [arXiv:0710.2544 [hep-th]].

\bibitem{AE2}
  P.~Aluffi, M.~Esole,  ``New Orientifold Weak Coupling Limits in F-theory,'' JHEP {\bf 1002}, 020 (2010).
  [arXiv:0908.1572 [hep-th]].

\bibitem{EFY}
M.~Esole, J.~Fullwood, S.T.~Yau.
``$D_5$ elliptic fibrations: Non-Kodaira fibers and new orientifold limits of F-theory"
[arXiv:1110.6177 [hep-th]]

\bibitem{FL}
W.~Fulton, S.~Lang.
\newblock {\em Riemann-Roch algebra}, volume~277 of {\em Grundlehren der Mathematischen Wissenschaften [Fundamental Principles of Mathematical Sciences]}.
\newblock Springer-Verlag, New York, 1985.

\bibitem{fulton}
W.~Fulton.
\newblock {\em Intersection theory}, volume~2 of {\em Ergebnisse der Mathematik
  und ihrer Grenzgebiete (3) [Results in Mathematics and Related Areas (3)]}.
\newblock Springer-Verlag, Berlin, 1984.

\bibitem{James}
  J.~Fullwood,
  ``On generalized Sethi-Vafa-Witten formulas,''
    [arXiv:1103.6066 [math.AG]].

\bibitem{URL}
 www.math.fsu.edu/$\sim$hoeij/files/Hirzebruch

\end{document}